
\documentclass[12pt]{amsart}
\usepackage{amsmath}
\usepackage{amssymb}
\usepackage{amsthm}
\usepackage[final]{hyperref}
\usepackage{graphicx}
\newtheorem{thm}{Theorem}
\newtheorem{lem}[thm]{Lemma}

\newtheorem{prop}[thm]{Proposition}
\theoremstyle{definition}

\theoremstyle{definition}
\newtheorem{rem}{Remark}
\newcommand{\R}{\mathbb{R}}

\title[Analytic Solutions for NLS]{Global Analytic Solutions for the Nonlinear Schr\"odinger Equation}

\author{Daniel Oliveira da Silva}
\address{Department of Mathematics \\
Nazarbayev University \\
Qabanbay Batyr Avenue 53 \\
010000 Nur-Sultan \\
Republic of Kazakhstan}

\author{Magzhan Biyar}
\address{Department of Mathematics \\
Nazarbayev University \\
Qabanbay Batyr Avenue 53 \\
010000 Nur-Sultan \\
Republic of Kazakhstan}

\email{daniel.dasilva@nu.edu.kz}
\email{magzhan.biyarov@nu.edu.kz}

\keywords{Global well-posedness; analytic solutions; Gevrey spaces}
\subjclass[2010]{35F25; 35Q40}

\begin{document}

\begin{abstract}
We prove the existence of global analytic solutions to the nonlinear Schr\"odinger equation in one dimension for a certain type of analytic initial data in $L^2$.
\end{abstract}

\maketitle


\section{Introduction}

The nonlinear Schr\"odinger equation is the equation
\begin{equation}\label{NLS}\begin{aligned}
i u_{t} + \Delta u = |u|^{p-1}u,
\end{aligned}\end{equation}
where $u: \mathbb{R}^{1+d} \rightarrow \mathbb{C}$.  This equation has been studied extensively for data in the Sobolev spaces $H^{s}$.  For a detailed discussion the $H^{s}$ theory for this equation, see chapter 3 of \cite{T2006} and the many references therein.  Recently, there has been much interest in developing a theory of analytic solutions to partial differential equations of all types, and many results exist in this direction.  For a brief sampling of results, see \cite{BGK2005, BGK2006, BGK2010, GK2003, GK2002, L2012, GHHP2013, HP2012, HHP2011, HHP2006, ST2015, SD2016, D2017, D2018}.

In the case of equation \eqref{NLS}, there exist several results regarding the existence of global analytic solutions.  An early result in this direction is that of Hayashi \cite{H1990}, who studied the cubic case in dimensions $d \geq 2$ for small initial data.  In the same year, Hayashi and Saitoh \cite{HS1990-2} obtained a similar result, but using milder smallness assumptions on the data.  This was later generalized by Nakamitsu \cite{N2005} to $p-1 = 2 \kappa$, where
\[
\frac{d}{2} - 1 \leq \frac{1}{\kappa} \leq \frac{d}{2},
\]
but again requiring smallness assumptions on the initial data.  For the cubic case, these smallness assumptions were later removed by Tesfahun in \cite{T2017}, who considered the problem in dimensions $d = 1,2,3$.

In the present work, we will extend these results to the one-dimensional case where $p$ can be any odd number.  In particular, we will prove the following theorem:
\begin{thm}\label{main}
Let $p$ be an odd natural number, and let $f \in L^{2}(\mathbb{R})$.  Suppose that $f$ admits a holomorphic extension $\tilde{f}$ on the set
\[
S_{\sigma_{0}} = \{ x + iy \in \mathbb{C}:\ |y| < \sigma_{0} \},
\]
and that
\[
\sup_{|y| < \sigma_{0}} \| \tilde{f}(\cdot + i y) \|_{L^{2}_{x}} < \infty.
\]
Then for any $T > 0$, the Cauchy problem
\begin{equation}\label{NLSCauchy}\begin{aligned}
& i u_{t} + \Delta u = |u|^{p-1}u, \\
& \quad u(x,0) = f(x).
\end{aligned}\end{equation}
has a unique solution $u \in C([0,T];L^{2})$.  Moreover, this solution is the restriction to the real line of a function $\tilde{u}$ which is holomorphic on the set $S_{\sigma}$, where
\[
\sigma < \min\left\{ \sigma_0, C T^{-1-\epsilon} \right\}.
\]
for some constant $C > 0$ and any $\epsilon > 0$.  Thus, the analyticity of $u$ persists for all time.
\end{thm}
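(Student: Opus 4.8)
\noindent\emph{Outline of the proof.}
The plan is to pass to the Fourier side and to work in Gevrey spaces. By the Paley--Wiener--type characterisation recalled above, the hypotheses on $f$ amount to $f \in G^{\sigma_{0},0}$, where $\|g\|_{G^{\delta,s}} := \big\| e^{\delta|\xi|}\langle\xi\rangle^{s}\widehat{g}(\xi)\big\|_{L^{2}_{\xi}}$; moreover $f \in G^{\delta,s}$ for \emph{every} $\delta \in (0,\sigma_{0})$ and \emph{every} $s \geq 0$, since $e^{(\delta-\sigma_{0})|\xi|}\langle\xi\rangle^{s}$ is bounded. It therefore suffices to produce, for one fixed $s > 1/2$, a solution $u \in C([0,T];G^{\sigma,s})$ with $\sigma$ as in the statement: since $G^{\sigma,s}\hookrightarrow L^{2}$, the Paley--Wiener theorem then turns such a $u$ into the restriction of a function $\tilde{u}$ holomorphic on $S_{\sigma}$, and places $u$ in $C([0,T];L^{2})$.

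For the local theory I would run a fixed-point argument for the Duhamel map
\[
\Phi(u)(t) = e^{it\Delta} f - i\int_{0}^{t} e^{i(t-t')\Delta}\big(|u|^{p-1}u\big)(t')\,dt'
\]
in $C([0,\delta];G^{\sigma,s})$ with $s > 1/2$. Two facts make this work. First, $e^{it\Delta}$ is a Fourier multiplier of modulus one, hence an isometry of every $G^{\sigma,s}$ and of $L^{2}$. Second, $G^{\sigma,s}$ is a Banach algebra for $s > 1/2$: the elementary inequality $e^{\sigma|\xi|}\leq e^{\sigma|\xi_{1}|}e^{\sigma|\xi_{2}|}$ for $\xi = \xi_{1}+\xi_{2}$ reduces the product estimate to the classical $H^{s}$ algebra inequality, while $\|\bar{u}\|_{G^{\sigma,s}} = \|u\|_{G^{\sigma,s}}$ because conjugation only reflects $\xi\mapsto-\xi$. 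Writing $|u|^{p-1}u = u^{(p+1)/2}\,\bar{u}^{(p-1)/2}$ --- legitimate precisely because $p$ is odd, so this is a polynomial in $u$ and $\bar u$ --- the algebra property yields $\||u|^{p-1}u\|_{G^{\sigma,s}}\lesssim\|u\|_{G^{\sigma,s}}^{p}$ together with the matching Lipschitz bound on balls. The contraction then closes on an interval of length $\delta\gtrsim(1+\|f\|_{G^{\sigma,s}})^{-(p-1)}$, with $\sup_{[0,\delta]}\|u(t)\|_{G^{\sigma,s}}\leq 2\|f\|_{G^{\sigma,s}}$; in particular the solution is unique in $C([0,T];G^{\sigma,s})$, which (the construction being in particular an $L^{2}$ solution) gives the uniqueness asserted in the theorem.

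The core of the argument is to iterate this local statement up to the prescribed time $T$ while keeping quantitative control of the radius of analyticity; here I would follow the scheme of Tesfahun \cite{T2017}. The difficulty is that the $G^{\sigma,s}$-norm is not conserved, so naive iteration at a fixed radius makes the lifespan collapse; the remedy is to surrender a small amount of radius at each step. The ingredients are mass conservation $\|u(t)\|_{L^{2}} = \|f\|_{L^{2}}$ (from pairing \eqref{NLS} with $\bar{u}$, legitimate since $u$ is smooth), which supplies the low-frequency a priori control, and an \emph{approximate conservation law}: for $0 < \sigma' < \sigma \le \sigma_{0}$ and the local solution on $[0,\delta]$ as above,
\[
\sup_{0\le t\le\delta}\|u(t)\|_{G^{\sigma',s}}^{2} \;\le\; \|f\|_{G^{\sigma',s}}^{2} \;+\; C\,\delta\,\Lambda(\sigma-\sigma',\,\|f\|_{L^{2}},\,\|f\|_{G^{\sigma,s}}),
\]
proved by differentiating $\|u(t)\|_{G^{\sigma',s}}^{2}$ in $t$, inserting \eqref{NLS}, and estimating the resulting multilinear expression via the sub-multiplicativity of $e^{\sigma|\xi|}$ along the convolution $\xi = \xi_{1}+\dots+\xi_{p}$, splitting into low and high output frequencies and using mass conservation on the former. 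Running this over $\lfloor T/\delta\rfloor$ steps and optimising both the number of steps and the per-step radius loss $\sigma-\sigma'$ keeps $\|u(t)\|_{G^{\sigma(t),s}}$ bounded on $[0,T]$ while leaving a residual radius $\sigma(T)\gtrsim T^{-1-\epsilon}$ for any prescribed $\epsilon>0$; together with $\sigma(T)\le\sigma_{0}$ this is the asserted $\sigma<\min\{\sigma_{0},CT^{-1-\epsilon}\}$.

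I expect the decisive step to be this last one --- making the approximate conservation law precise with an error whose dependence on $\sigma-\sigma'$ and on the current norm can be summed over an iteration long enough to reach time $T$, so as to extract exactly the $T^{-1-\epsilon}$ rate rather than a worse one. By contrast, the complex conjugates in $|u|^{p-1}u$, the usual obstruction to a direct holomorphic-extension argument, cause no trouble here: on the Fourier side conjugation merely reflects the frequency and leaves the weight $e^{\sigma|\xi|}$ invariant, which is precisely why the whole argument can be run with a single scale of Gevrey spaces.
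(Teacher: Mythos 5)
Your local theory coincides with the paper's (Duhamel fixed point in $G^{\sigma,s}$, $s>1/2$, unitarity of $e^{it\Delta}$, the algebra property, oddness of $p$), so that part is fine. The gap is in the global step, which is exactly the part you yourself flag as "the decisive step": you assert an approximate conservation law whose error is a function $\Lambda(\sigma-\sigma',\|f\|_{L^2},\|f\|_{G^{\sigma,s}})$ of the radius \emph{decrement}, to be summed over an iteration that surrenders radius at each step, but you neither prove such a law nor verify that this error structure can be summed to yield the rate $T^{-1-\epsilon}$. In fact the mechanism you describe is not the one that works here. When you differentiate $\|u(t)\|^2_{G^{\sigma'}}$ the error is a commutator-type term $N(u)=|U|^{p-1}U-e^{\sigma'|\nabla|}(|u|^{p-1}u)$, and the only available gain comes from the pointwise bound on $e^{\sigma\sum|\eta_j|}-e^{\sigma|\sum\eta_j|}$ (Lemma \ref{lemest} of the paper): this produces a factor $\sigma^{\theta}$ of the \emph{current, fixed} radius at the price of $\theta$ derivatives placed on one factor. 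Trading a radius decrement instead gives a factor $(\sigma-\sigma')^{-\theta}$, i.e.\ a loss that blows up as the per-step decrement shrinks, so an iteration of the kind you outline does not close; conversely, with the $\sigma^{\theta}$ gain no radius needs to be surrendered during the evolution at all, which is why the paper keeps $\sigma$ fixed on $[0,T]$ and only chooses it small at the outset, $\sigma\lesssim T^{-1/\theta}$ with $\theta<1$, whence $T^{-1-\epsilon}$.

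A second, related gap: you propose to control the iteration with mass conservation plus the $G^{\sigma,s}$ norm for some $s>1/2$. But the $\theta$ derivatives produced by the exponential-difference estimate have to be absorbed by Sobolev/Gagliardo--Nirenberg bounds on $U=e^{\sigma|\nabla|}u$, and $\|u\|_{L^2}=\|f\|_{L^2}$ gives no control of $\|U\|_{L^2}$, let alone of $\||\nabla|^{\theta}U\|_{L^\infty}$. The paper resolves this by working not with the Gevrey norm itself but with the Gevrey-modified energy
\[
S(t)=\int_{\R}|U|^2+|\nabla U|^2+\tfrac{2}{p+1}|U|^{p+1}\,dx ,
\]
proving $|S'(t)|\lesssim \sigma^{\theta}S^{p/2}\bigl(S^{1/2}+S^{p/2}\bigr)$ (estimate \eqref{sestimate}, via Lemma \ref{lemest} and Gagliardo--Nirenberg, which is what forces $\theta<1$ and hence the $\epsilon$), and closing a bootstrap on all of $[0,T]$ to get Proposition \ref{propglobal}; the theorem then follows from the embedding $G^{\sigma,1}\hookrightarrow L^2$ and the Paley--Wiener theorem. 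As written, your proposal establishes neither global boundedness of any Gevrey norm nor the quantitative radius bound, so the main content of the theorem remains unproved; to repair it you would need to replace the decrement-based law by an almost conservation law with an explicit $\sigma^{\theta}$ error for a quantity strong enough (an $H^1$-level Gevrey energy such as $S$) to pay for the derivatives the error term creates.
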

\noindent For the proof, we first construct local solutions by a standard fixed-point argument.  The procedure is standard, but for completeness, it will be shown in section \ref{LWP}.  In section \ref{GWP}, we then show that the local solutions can be extended to arbitrarily large time intervals, if we allow the radius of analyticity $\sigma$ to decay.  The proof uses a bootstrap argument and an almost conserved quantity, which we control by using the parameter $\sigma$.  We begin our discussion by introducing the necessary tools in section \ref{prel}.


\section{Preliminaries}\label{prel}

An important tool in our construction of analytic solutions to \eqref{NLSCauchy} are the Gevrey spaces $G^{\sigma}(\mathbb{R})$, which are defined by the norm
\[
\| f \|_{G^{\sigma}} = \| e^{\sigma |\xi|} \hat{f}(\xi) \|_{L^{2}_{\xi}},
\]
where $\hat{f}$ denotes the spatial Fourier transform, $\langle x \rangle = (1 + |x|^{2})^{1/2}$, and $\sigma > 0$.  The importance of the Gevrey spaces comes from the following Paley-Wiener theorem, for which a proof can be found in \cite{K1976}:
\begin{thm}\label{wiener}
Let $\sigma > 0$.  Then, the following are equivalent:
\begin{enumerate}
\item $f \in G^{\sigma}(\R)$;
\item $f$ is the restriction to the real line of a function $\tilde{f}$ which is holomorphic in the strip
\[
S_{\sigma} = \{ x + i y: x,y \in \mathbb{R}, |y| < \sigma \}
\]
and satisfies
\[
\sup_{|y| < \sigma}\| \tilde{f}(x + i y) \|_{L^2_{x}} < \infty.
\]
\end{enumerate}
\end{thm}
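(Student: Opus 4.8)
The plan is to prove the two implications separately, with the common bridge being the modulation identity
\[
\widehat{\tilde{f}(\cdot + iy)}(\xi) = e^{-y\xi}\,\hat{f}(\xi), \qquad |y| < \sigma,
\]
which, via Plancherel, converts the horizontal $L^2$ bounds on $\tilde{f}$ into weighted $L^2$ control of $\hat{f}$ and vice versa. Indeed, formally $\tilde{f}(x+iy) = \frac{1}{2\pi}\int e^{i(x+iy)\xi}\hat{f}(\xi)\,d\xi$, so the Fourier transform in $x$ of the slice $\tilde{f}(\cdot + iy)$ is $e^{-y\xi}\hat{f}(\xi)$; since $\|e^{-y\xi}g\|_{L^2_\xi}^2 = \int e^{-2y\xi}|g(\xi)|^2\,d\xi$ and $e^{-2y\xi}\le e^{2\sigma|\xi|}$ for $|y|\le\sigma$, the entire matter reduces to comparing the weights $e^{-2y\xi}$ with $e^{2\sigma|\xi|}$.

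For $(1)\Rightarrow(2)$, I would take the integral above as the definition of $\tilde{f}$ on $S_\sigma$. To see that it defines a holomorphic function, fix a substrip $|y|\le\sigma'<\sigma$ and note that $\int e^{\sigma'|\xi|}|\hat{f}(\xi)|\,d\xi \le \|e^{(\sigma'-\sigma)|\xi|}\|_{L^2_\xi}\,\|f\|_{G^\sigma} < \infty$ by Cauchy--Schwarz, because $\sigma'-\sigma<0$. Hence the integrand is dominated uniformly on the substrip by an $L^1$ function, so $\tilde{f}$ is continuous and, by Morera's theorem together with Fubini (the integrand being entire in $z$), holomorphic there; letting $\sigma'\uparrow\sigma$ gives holomorphy on all of $S_\sigma$. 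The uniform bound then follows from Plancherel: for each $|y|<\sigma$,
\[
\|\tilde{f}(\cdot + iy)\|_{L^2_x}^2 = c\int e^{-2y\xi}|\hat{f}(\xi)|^2\,d\xi \le c\int e^{2\sigma|\xi|}|\hat{f}(\xi)|^2\,d\xi = c\,\|f\|_{G^\sigma}^2,
\]
where $c$ is the Plancherel constant, which is finite and independent of $y$.

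The substantive direction is $(2)\Rightarrow(1)$, where the modulation identity must be earned rather than assumed. Writing $M = \sup_{|y|<\sigma}\|\tilde{f}(\cdot+iy)\|_{L^2_x}$, I would establish $\widehat{\tilde{f}(\cdot+iy)}(\xi)=e^{-y\xi}\hat{f}(\xi)$ by applying Cauchy's theorem to the holomorphic function $z\mapsto \tilde{f}(z)e^{-iz\xi}$ on the rectangle with horizontal sides at heights $0$ and $y$ and vertical sides at $\pm R$. The main obstacle is that $\tilde{f}$ is controlled only in $L^2$ on each horizontal line, not pointwise, so the vanishing of the vertical contributions is not immediate. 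This is resolved by a Fubini argument: since $\int_{\R}\big(\int_0^{\sigma}|\tilde{f}(x+it)|^2\,dt\big)\,dx<\infty$, the inner integral is an $L^1$ function of $x$, whence there is a sequence $R_n\to\infty$ along which $\int_0^{\sigma}|\tilde{f}(\pm R_n+it)|^2\,dt\to0$; Cauchy--Schwarz in $t$ then forces the vertical segments at $\pm R_n$ to vanish, yielding the identity (after first truncating $\hat{f}$ to reduce to the $L^1$ setting where these contour integrals are literal). With the identity in hand, Plancherel gives $\int e^{-2y\xi}|\hat{f}(\xi)|^2\,d\xi \le c\,M^2$ for every $|y|<\sigma$. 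Finally I would let $y\to-\sigma^+$ on $\{\xi>0\}$ and $y\to\sigma^-$ on $\{\xi<0\}$, so that $e^{-2y\xi}\uparrow e^{2\sigma|\xi|}$ on each half-line; monotone convergence (or Fatou) then produces $\int e^{2\sigma|\xi|}|\hat{f}(\xi)|^2\,d\xi\le 2c\,M^2<\infty$, that is $f\in G^\sigma(\R)$, completing the equivalence.
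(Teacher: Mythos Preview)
Your argument is the standard Paley--Wiener proof and is essentially correct. The modulation identity, the Morera/Fubini verification of holomorphy on substrips, the contour-shift via Cauchy's theorem with a sequence $R_n\to\infty$ chosen so that the vertical contributions vanish, and the final monotone-convergence step are all sound. One small point: your parenthetical about ``truncating $\hat f$ to reduce to the $L^1$ setting'' is a bit muddled---what you actually need is simply that $\tilde f(\cdot+it)$ is continuous on compact intervals, so the contour integrals over $[-R_n,R_n]$ make literal sense, and then the $R_n\to\infty$ limit is taken in $L^2_\xi$ via Plancherel. Also, for the Fubini step you should integrate over $[0,y]$ with $y<\sigma$ fixed (not $[0,\sigma]$), since continuity and boundedness of $\tilde f$ are only guaranteed on closed substrips strictly inside $S_\sigma$.

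As for comparison with the paper: there is nothing to compare. The paper does not prove this theorem at all; it merely states it and refers the reader to Katznelson \cite{K1976} for a proof. Your write-up therefore supplies what the paper omits.
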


\begin{rem}
It should be noted that there is no assumption in this theorem that the function $f$ must be real-valued.  This is important, as initial data and solutions to equation \eqref{NLS} are complex-valued.
\end{rem}
In addition to the spaces $G^{\sigma}$, we will also make use of the hybrid Gevrey-Sobolev spaces $G^{\sigma,s}(\mathbb{R})$ defined by the norm
\[
\| f \|_{G^{\sigma,s}} = \| e^{\sigma |\xi|} \langle \xi \rangle^{s} \hat{f}(\xi) \|_{L^{2}_{\xi}}.
\]
It is a simple matter to see that these spaces satisfy the embeddings
\begin{equation}\label{embedding1}
G^{\sigma',s'} \hookrightarrow G^{\sigma, s}
\end{equation}
for $\sigma \leq \sigma'$ and $s,s' \in \mathbb{R}$, which follow from the inequalities
\[
\| f \|_{G^{\sigma,s}} \lesssim \| f \|_{G^{\sigma',s'}}.
\]
Note that $G^{0,s} = H^{s}$, so that for $\sigma = 0$ the inequality becomes
\begin{equation}\label{embedding2}
\| f \|_{H^{s}} \lesssim \| f \|_{G^{\sigma',s'}}
\end{equation}
and the associated embedding is
\[
G^{\sigma',s'} \hookrightarrow H^{s}.
\]
Gevrey-Sobolev spaces also obey the following generalization to the standard alegbra property of Sobolev spaces.
\begin{lem}\label{Lemma3}
If $s > 1/2$ and $\sigma \geq 0$, then the space $G^{\sigma, s}(\R)$ is an algebra, and
\[
\| uv \|_{G^{\sigma,s}} \lesssim \| u \|_{G^{\sigma,s}} \| v \|_{G^{\sigma,s}}.
\]
\end{lem}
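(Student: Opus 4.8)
The strategy is to reduce the Gevrey-Sobolev algebra property to the classical Sobolev algebra property by exploiting the subadditivity of the exponential weight. The key pointwise inequality is
\[
e^{\sigma|\xi|} \leq e^{\sigma|\xi - \eta|} e^{\sigma|\eta|},
\]
which follows from the triangle inequality $|\xi| \leq |\xi - \eta| + |\eta|$ and the monotonicity of $t \mapsto e^{\sigma t}$ for $\sigma \geq 0$. Combined with the elementary bound $\langle \xi \rangle^{s} \lesssim \langle \xi - \eta \rangle^{s} + \langle \eta \rangle^{s} \lesssim \langle \xi - \eta \rangle^{s}\langle \eta \rangle^{s}$ (valid for $s \geq 0$, and in particular for $s > 1/2$), we get
\[
e^{\sigma|\xi|}\langle\xi\rangle^{s} \lesssim \bigl(e^{\sigma|\xi-\eta|}\langle\xi-\eta\rangle^{s}\bigr)\bigl(e^{\sigma|\eta|}\langle\eta\rangle^{s}\bigr).
\]

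With this in hand, I would introduce the auxiliary functions $U$ and $V$ defined on the Fourier side by $\widehat{U}(\xi) = e^{\sigma|\xi|}\langle\xi\rangle^{s}|\hat{u}(\xi)|$ and $\widehat{V}(\xi) = e^{\sigma|\xi|}\langle\xi\rangle^{s}|\hat{v}(\xi)|$, so that $\|U\|_{H^{s}} = \|u\|_{G^{\sigma,s}}$ and $\|V\|_{H^{s}} = \|v\|_{G^{\sigma,s}}$. Writing $\widehat{uv}(\xi) = \int \hat{u}(\xi-\eta)\hat{v}(\eta)\,d\eta$ and applying the weighted pointwise bound above together with $|\hat{u}(\xi-\eta)| \leq e^{-\sigma|\xi-\eta|}\langle\xi-\eta\rangle^{-s}\widehat{U}(\xi-\eta)$ (and similarly for $v$), we obtain
\[
e^{\sigma|\xi|}\langle\xi\rangle^{s}\bigl|\widehat{uv}(\xi)\bigr| \lesssim \int \widehat{U}(\xi-\eta)\,\widehat{V}(\eta)\,d\eta = \widehat{UV}(\xi),
\]
where the right-hand side is (up to the nonnegativity of $\widehat{U},\widehat{V}$) the Fourier transform of the product of two $H^{s}$ functions. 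Taking $L^{2}_{\xi}$ norms gives $\|uv\|_{G^{\sigma,s}} \lesssim \|UV\|_{H^{s}}$, and then the standard Sobolev algebra property for $s > 1/2$ yields $\|UV\|_{H^{s}} \lesssim \|U\|_{H^{s}}\|V\|_{H^{s}} = \|u\|_{G^{\sigma,s}}\|v\|_{G^{\sigma,s}}$, completing the proof.

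The only genuinely delicate point is the product-rule bound $\langle\xi\rangle^{s} \lesssim \langle\xi-\eta\rangle^{s}\langle\eta\rangle^{s}$; for $0 \le s$ this is immediate from $\langle\xi\rangle \lesssim \langle\xi-\eta\rangle\langle\eta\rangle$ raised to the $s$-th power, so there is no real obstacle here and the hypothesis $s > 1/2$ is needed only to invoke the classical Sobolev algebra estimate at the last step. One should also note that passing to the absolute values $|\hat{u}|, |\hat{v}|$ is harmless since we only ever estimate $|\widehat{uv}(\xi)|$ from above, and that the implicit constants depend only on $s$, not on $\sigma$ — a uniformity that will matter later when $\sigma$ is taken to depend on time.
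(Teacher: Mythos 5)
Your overall plan---reduce the Gevrey--Sobolev algebra property to the classical $H^{s}$ algebra property via the subadditivity $e^{\sigma|\xi|}\le e^{\sigma|\xi-\eta|}e^{\sigma|\eta|}$---is sound, but the execution has a genuine inconsistency in the bookkeeping of weights. You define $\widehat{U}(\xi)=e^{\sigma|\xi|}\langle\xi\rangle^{s}|\hat{u}(\xi)|$, for which $\|U\|_{L^{2}}=\|u\|_{G^{\sigma,s}}$ but $\|U\|_{H^{s}}=\|u\|_{G^{\sigma,2s}}$; the claimed identity $\|U\|_{H^{s}}=\|u\|_{G^{\sigma,s}}$ is false. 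Consequently, your chain
\[
\|uv\|_{G^{\sigma,s}}\lesssim\|UV\|_{H^{s}}\lesssim\|U\|_{H^{s}}\|V\|_{H^{s}}
\]
only yields $\|uv\|_{G^{\sigma,s}}\lesssim\|u\|_{G^{\sigma,2s}}\|v\|_{G^{\sigma,2s}}$, which is weaker than the lemma (the right-hand side involves strictly stronger norms, so the algebra property does not follow). The trouble comes from combining the multiplicative splitting $\langle\xi\rangle^{s}\lesssim\langle\xi-\eta\rangle^{s}\langle\eta\rangle^{s}$ with absorbing $\langle\xi\rangle^{s}$ into $U$ and $V$: after that, the convolution estimate forces you to pay $s$ extra derivatives on each factor, and no application of Young or Sobolev embedding recovers the symmetric bound.

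The fix is small and restores exactly the reduction you intended: set $\widehat{U}(\xi)=e^{\sigma|\xi|}|\hat{u}(\xi)|$ and $\widehat{V}(\xi)=e^{\sigma|\xi|}|\hat{v}(\xi)|$ (no $\langle\xi\rangle^{s}$ factor), so that $\|U\|_{H^{s}}=\|u\|_{G^{\sigma,s}}$ genuinely holds. Then the exponential subadditivity alone gives $e^{\sigma|\xi|}|\widehat{uv}(\xi)|\le(\widehat{U}*\widehat{V})(\xi)=\widehat{UV}(\xi)$, hence $\|uv\|_{G^{\sigma,s}}\le\|UV\|_{H^{s}}\lesssim\|U\|_{H^{s}}\|V\|_{H^{s}}=\|u\|_{G^{\sigma,s}}\|v\|_{G^{\sigma,s}}$ for $s>1/2$, with constants independent of $\sigma$ as you note. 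With this correction your argument is a legitimate alternative to the paper's proof, which instead splits $\langle\xi\rangle^{s}\lesssim\langle\xi-\eta\rangle^{s}+\langle\eta\rangle^{s}$ additively and concludes by Young's inequality ($L^{2}*L^{1}\to L^{2}$) together with Cauchy--Schwarz, using $s>1/2$ to control the $L^{1}$ norm; your route outsources that step to the known Sobolev algebra property, while the paper's is self-contained.
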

\begin{proof}
By definition, we have
\[
\| uv \|_{G^{\sigma,s}} = \left\| e^{\sigma|\xi|}\langle \xi \rangle^{s} \widehat{uv}(\xi) \right\|_{L^{2}_{\xi}}.
\]
Observe that
\[
\widehat{uv}(\xi) = \int_{\R} \hat{u}(\xi-\eta) \hat{v}(\eta)\ d\eta.
\]
By the triangle inequality, we also have that
\[\begin{aligned}
e^{\sigma|\xi|} & \leq e^{\sigma|\xi - \eta|}e^{\sigma|\eta|}, \\
\langle \xi \rangle^{s} & \lesssim \langle \xi -\eta \rangle^{s} + \langle \eta \rangle^{s}.
\end{aligned}\]
It follows from these observations that
\[\begin{aligned}
\| uv \|_{G^{\sigma,s}} & \lesssim \left\| \int_{\R} \left[e^{\sigma|\xi - \eta|}\langle \xi -\eta \rangle^{s}|\hat{u}(\xi - \eta)|\right] \left[e^{\sigma|\eta|}|\hat{v}(\eta)|\right] \ d\eta \right\|_{L^{2}_{\xi}} \\
& \quad + \left\| \int_{\R} \left[e^{\sigma|\xi - \eta|}|\hat{u}(\xi - \eta)|\right] \left[e^{\sigma|\eta|}\langle \eta \rangle^{s}|\hat{v}(\eta)|\right] \ d\eta \right\|_{L^{2}_{\xi}}.
\end{aligned}\]
Applying Young's inequality to this, we obtain
\[\begin{aligned}
\| uv \|_{G^{\sigma,s}} & \lesssim \left\| e^{\sigma|\xi|}\langle \xi \rangle^{s} |\hat{u}(\xi)| \right\|_{L^{2}_{\xi}} \left\| e^{\sigma|\xi|}|\hat{v}(\xi)| \right\|_{L^{1}_{\xi}} \\
& \quad + \left\| e^{\sigma|\xi|}|\hat{u}(\xi)| \right\|_{L^{1}_{\xi}} \left\| e^{\sigma|\xi|}\langle \xi \rangle^{s} |\hat{v}(\xi)| \right\|_{L^{2}_{\xi}} \\
& \lesssim \| u \|_{G^{\sigma,s}} \| v \|_{G^{\sigma,s}}.
\end{aligned}\]
Here, we have used the fact that
\[
\int_{\R} f\ dx \leq \left( \int_{\R} \langle x \rangle^{-2s}\ dx \right)^{1/2} \left( \int_{\R} \langle x \rangle^{2s} |f(x)|^2 \ dx\right)^{1/2},
\]
and that the first integral on the right converges for $s > 1/2$.
\end{proof}
With all these facts in mind, we will use the following strategy to prove Theorem \ref{main}:
\begin{itemize}
\item The assumptions on $f$ imply that $f \in G^{\sigma_0}$.  By the embedding in equation \eqref{embedding1}, $f \in G^{\sigma', s'}$ for any $\sigma' < \sigma_{0}$ and $s' \in \mathbb{R}$.  We use this fact to construct local solutions in $G^{\sigma', s'}$ for $s' > 1/2$.

\item By a standard argument, it suffices to show that the $G^{\sigma', s'}$ norm of the solution $u$ remains finite in the interval $[0,T]$ for the solution to exist up to time $T > 0$.  As will be shown in section \ref{GWP}, this will require that we choose $\sigma'$ sufficiently small, and $s' = 1$.

\item Once it is known that the $G^{\sigma', 1}$ norm remains finite, the embedding \eqref{embedding2} will imply that the $L^{2}$ norm remains bounded up to time $T$.  Thus, by the standard $L^{2}$ theory, the solutions may be continued up to time $T$ in $L^{2}$.  Moreover since $u(t) \in G^{\sigma', 1}$, it will also be analytic.

\end{itemize}


\section{Local Well-Posedness}\label{LWP}
To begin, let us first recall some basic facts about the Schr\"odinger equation.  Recall that the Cauchy problem
\[\begin{aligned}
& iu_{t} + \Delta u = F \\
& \ \ u(x,0) = f
\end{aligned}\]
can be rewritten in integral form using the Duhamel formula
\[
u(x,t) = e^{it\Delta}f - i \int_{0}^{t} e^{i(t-\tau)\Delta} F(\tau)\ d\tau.
\]
Applying this to equation \eqref{NLSCauchy}, we have
\begin{equation}\label{duhamel}
u(x,t) = e^{it\Delta}f - i \int_{0}^{t} e^{i(t-\tau)\Delta} |u(\tau)|^{p-1}u(\tau)\ d\tau.
\end{equation}
A \emph{strong} solution to \eqref{NLSCauchy} is a solution to the integral equation \eqref{duhamel}.  With this definition in mind, we may state our local result in a precise form.
\begin{prop}\label{LWPprop}
Let $p$ be an odd natural number, $\sigma' \geq 0$, and let $s' > 1/2$.  Then the Cauchy problem \eqref{NLSCauchy} is locally well-posed in $G^{\sigma', s'}(\R)$.  That is, for any $f\in G^{\sigma', s'}$, there exists a time $\delta = \delta(\|f\|) > 0$ such that the Cauchy problem \eqref{NLSCauchy} has a unique strong solution
\[
u \in C\left([0,\delta); G^{\sigma', s'}\right).
\]
Furthermore, the solution map $f \mapsto u$ is Lipschitz continuous from $G^{\sigma', s'}$ to $C\left([0,\delta); G^{\sigma', s'}\right)$.
\end{prop}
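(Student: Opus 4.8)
The plan is to run a contraction-mapping argument for the Duhamel map
\[
\Phi(u)(t) = e^{it\Delta}f - i\int_0^t e^{i(t-\tau)\Delta}\,|u(\tau)|^{p-1}u(\tau)\,d\tau
\]
on a closed ball in the space $X = C\left([0,\delta);G^{\sigma',s'}\right)$, equipped with the norm $\|u\|_X = \sup_{0\le t<\delta}\|u(t)\|_{G^{\sigma',s'}}$. The first observation I would record is that the free propagator $e^{it\Delta}$ is an isometry on every $G^{\sigma',s'}$, since on the Fourier side it is multiplication by $e^{-it|\xi|^2}$, which has modulus one and hence commutes with the weight $e^{\sigma'|\xi|}\langle\xi\rangle^{s'}$. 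Consequently $\|e^{it\Delta}f\|_{G^{\sigma',s'}} = \|f\|_{G^{\sigma',s'}}$ and, inside the Duhamel integral, $\|e^{i(t-\tau)\Delta}g\|_{G^{\sigma',s'}} = \|g\|_{G^{\sigma',s'}}$.

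The key nonlinear estimate is that the map $u\mapsto |u|^{p-1}u$ is locally Lipschitz on $G^{\sigma',s'}$ when $s'>1/2$. Here I would use that $p$ is odd, so $|u|^{p-1}u = u^{(p+1)/2}\,\bar u^{(p-1)/2}$ is a genuine polynomial in $u$ and $\bar u$; since $\|\bar u\|_{G^{\sigma',s'}} = \|u\|_{G^{\sigma',s'}}$ (complex conjugation reflects $\hat u$ and leaves the even weight invariant), the algebra property of Lemma \ref{Lemma3} applied $p-1$ times gives
\[
\bigl\| |u|^{p-1}u \bigr\|_{G^{\sigma',s'}} \lesssim \|u\|_{G^{\sigma',s'}}^{p}.
\]
For the difference estimate I would write $|u|^{p-1}u - |v|^{p-1}v$ as a telescoping sum of terms each containing one factor of $u-v$ (or $\bar u-\bar v$) and $p-1$ factors that are either $u,\bar u,v,\bar v$, and again apply Lemma \ref{Lemma3} to obtain
\[
\bigl\| |u|^{p-1}u - |v|^{p-1}v \bigr\|_{G^{\sigma',s'}} \lesssim \left(\|u\|_{G^{\sigma',s'}} + \|v\|_{G^{\sigma',s'}}\right)^{p-1}\|u-v\|_{G^{\sigma',s'}}.
\]

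With these two ingredients the rest is routine. On the ball $B = \{u\in X:\|u\|_X\le 2\|f\|_{G^{\sigma',s'}}\}$ one estimates, using the isometry property and pulling the time integral out in $L^\infty_t$,
\[
\|\Phi(u)\|_X \le \|f\|_{G^{\sigma',s'}} + C\,\delta\,(2\|f\|_{G^{\sigma',s'}})^{p},
\qquad
\|\Phi(u)-\Phi(v)\|_X \le C\,\delta\,(4\|f\|_{G^{\sigma',s'}})^{p-1}\|u-v\|_X,
\]
so choosing $\delta = \delta(\|f\|_{G^{\sigma',s'}})>0$ small enough makes $\Phi$ a contraction of $B$ into itself; the Banach fixed point theorem then yields a unique strong solution in $B$, and a standard uniqueness-in-the-whole-space argument (subtract two solutions on a short interval, absorb the Lipschitz term) upgrades this to unconditional uniqueness in $C\left([0,\delta);G^{\sigma',s'}\right)$. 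Continuity in $t$ follows from the continuity of $t\mapsto e^{it\Delta}g$ in $G^{\sigma',s'}$ (dominated convergence on the Fourier side) together with continuity of the Duhamel integral. Finally, Lipschitz dependence on the data comes from applying the same difference estimates to $\Phi_f(u)-\Phi_g(v)$: one gets $\|u-v\|_X \le \|f-g\|_{G^{\sigma',s'}} + C\delta(\cdots)\|u-v\|_X$, and absorbing the last term gives $\|u-v\|_X \lesssim \|f-g\|_{G^{\sigma',s'}}$. The only point requiring genuine care — and the main obstacle — is the nonlinear estimate, specifically making sure the oddness of $p$ is used so that $|u|^{p-1}u$ is polynomial and Lemma \ref{Lemma3} applies cleanly; once that is in hand the argument is entirely standard, which is why the proposition is stated with the remark that the procedure is routine.
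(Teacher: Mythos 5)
Your proposal is correct and follows essentially the same route as the paper: the Duhamel map, the isometry of $e^{it\Delta}$ on $G^{\sigma',s'}$, the algebra property of Lemma \ref{Lemma3} applied to the polynomial nonlinearity (using that $p$ is odd), a telescoping difference estimate, and a contraction on a ball in $C\left([0,\delta);G^{\sigma',s'}\right)$ with $\delta$ depending on $\|f\|_{G^{\sigma',s'}}$. If anything, your write-up is slightly more careful than the paper's (explicit ball of radius $2\|f\|_{G^{\sigma',s'}}$, explicit use of $\|\bar u\|_{G^{\sigma',s'}}=\|u\|_{G^{\sigma',s'}}$, and remarks on time-continuity and unconditional uniqueness), but the underlying argument is the same.
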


\begin{proof}
Fix $f \in G^{\sigma',s'}$, and define an operator $\Phi$ on $G^{\sigma',s'}$ by 
\[
\Phi(u)=e^{it\Delta}f-i\int_0^t e^{i(t-\tau)\Delta}|u(x,\tau)|^{p-1} u(x, \tau)\,d\tau.
\]
Since the operator $e^{i t \Delta}$ is unitary, it is easy to see that this integral formula implies the inequality
\begin{equation}\label{energyineq}
\| \Phi(u)(t) \|_{G^{\sigma',s'}} \leq \| f \|_{G^{\sigma',s'}} + \int_{0}^{t} \| |u(x,\tau)|^{p-1} u(x, \tau) \|_{G^{\sigma',s'}}\ d\tau.
\end{equation}
By Lemma \ref{Lemma3} we have, for $s' > 1/2$, 
\[
\|\Phi(u) (t) \|_{G^{\sigma',s'}} \leq \|f\|_{G^{\sigma',s'}} + C\int_{0}^{t}\| u(\tau) \|_{G^{\sigma',s'}}^{p} d\tau
\]
for some generic constant $C > 0$.  Taking the supremum over $t \in [0,\delta)$ gives us
\begin{equation}\label{maxnorm}
\|\Phi(u) \|_{L^{\infty}G^{\sigma',s'}} \leq \| f \|_{G^{\sigma',s'}} + C \delta \| u \|_{L^{\infty}G^{\sigma,s'}}^{p}.
\end{equation}
It follows that $\Phi$ maps $C([0,\delta); G^{\sigma',s'})$ to itself.

Next, we show that $\Phi$ is a contraction.  The existence of a unique fixed point will follow from the Contraction Mapping Principle.  Let $u, v \in C([0,\delta); G^{\sigma',s'})$ such that
\[
\| u \|_{L^{\infty}G^{\sigma',s'}} \leq \| f \|_{G^{\sigma',s'}} \textrm{ and } \| v \|_{L^{\infty}G^{\sigma',s'}} \leq \| f \|_{G^{\sigma',s'}}.  
\]
By applying equation \eqref{energyineq}, it is easy to see that
\[\begin{aligned}
\|\Phi(u)-\Phi(v)\|_{L^{\infty} G^{\sigma',s'}} & \leq C \delta \left(\| u \|_{L^{\infty}G^{\sigma',s'}}^{p-1} + \| v \|_{L^{\infty}G^{\sigma',s'}}^{p-1} \right) \|u-v\|_{L^{\infty} G^{\sigma',s'}} \\
& \leq 2C \delta \| f \|_{G^{\sigma',s'}}^{p-1} \| u - v \|_{L^{\infty}G^{\sigma',s'}}.
\end{aligned}\]
If
\[
\delta < \frac{1}{2 C \| f \|_{G^{\sigma',s'}}^{p-1}}
\]
then
\[
\|\Phi(u)-\Phi(v)\|_{L^{\infty}G^{\sigma',s'}} < \|u-v\|_{L^{\infty}G^{\sigma',s'}}.
\]
Thus $\Phi$ is a contraction.  The existence of a unique fixed point for $u$ follows from the Contraction Mapping Principle.  This fixed point satisfies equation \eqref{duhamel}, and so is a strong solution to the Cauchy problem \eqref{NLSCauchy}.

Finally, we must show that the solution map $f \mapsto u$ is continuous from $G^{\sigma', s'}$ to $L^{\infty}G^{\sigma', s'}$.  Suppose for two initial conditions $f, g\in G^{\sigma', s'}$, respectively with
\[
\| f \|_{G^{\sigma', s'}} \leq R \quad \textrm{and} \quad \| g \|_{G^{\sigma', s'}} \leq R
\]
we have the corresponding solutions $u$ and $v$, respectively.  As in the computations above, we may apply equation \eqref{energyineq} and Lemma \ref{Lemma3} to obtain
\[
\begin{aligned}
\|u-v\|_{L^{\infty}G^{\sigma', s'}} & \leq \|f-g\|_{G^{\sigma', s'}} + \int_{0}^{t}\||u|^{p-1}u-|v|^{p-1}v\|_{G^{\sigma', s'}} d\tau \\
& \leq \|f-g\|_{G^{\sigma', s'}} \\
& \quad + C\delta \left(\|u\|_{L^{\infty}G^{\sigma', s'}}^{p-1} + \|v\|_{L^{\infty}G^{\sigma', s'}}^{p-1}\right) \|u-v\|_{L^{\infty}G^{\sigma', s'}}.
\end{aligned}
\]
From equation \eqref{maxnorm} and the choice of $\delta$, it follows that
\[
\|u-v\|_{L^{\infty}G^{\sigma', s'}} \leq \|f-g\|_{G^{\sigma', s'}} + 2 C \delta R^{p-1} \|u-v\|_{L^{\infty}G^{\sigma', s'}}.
\]
If we now further make the assumption that $\delta$ also satisfies
\[
2 C \delta R^{p-1} < 1,
\]
then we may conclude that
\[
\|u-v\|_{L^{\infty}G^{\sigma', s'}} \leq \frac{\|f-g\|_{G^{\sigma', s'}}}{1-2C\delta R^{p-1}}.
\]
Continuity of the solution map follows.  This completes the proof of Proposition \ref{LWPprop}.

\end{proof}


\section{Global Existence}\label{GWP}

In this section, we will prove the following proposition, from which the second conclusion of Theorem \ref{main} will follow:
\begin{prop}\label{propglobal}
Let $u$ be the local solution to the Cauchy problem \eqref{NLSCauchy}, and let $T > 0$ be arbitrary.  Then there exists $\sigma > 0$ such that
\[
\sup_{t \in [0,T]} \| u(\cdot,t) \|_{G^{\sigma, 1}(\R)} < C
\]
for some constant $C > 0$.
\end{prop}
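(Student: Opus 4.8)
The plan is to run a bootstrap/continuation argument driven by an almost conserved quantity built from the $G^{\sigma,1}$ norm, with $\sigma = \sigma(t)$ allowed to shrink in time. Concretely, for a fixed parameter $\sigma>0$ (to be chosen small depending on $T$) I would study the quantity $A(t) := \|u(\cdot,t)\|_{G^{\sigma,1}}^2 = \|e^{\sigma|\xi|}\langle\xi\rangle\,\hat u(\xi,t)\|_{L^2_\xi}^2$. Differentiating in $t$ and using the equation \eqref{NLS}, the linear part $i\Delta u$ contributes nothing to $\frac{d}{dt}\|e^{\sigma|\xi|}\langle\xi\rangle\hat u\|_{L^2}^2$ because the multiplier $e^{i t|\xi|^2}$ has modulus one, so only the nonlinear term survives:
\[
\frac{d}{dt} A(t) \;\lesssim\; \left| \mathrm{Re} \int_{\R} e^{2\sigma|\xi|}\langle\xi\rangle^2\, \overline{\hat u(\xi,t)}\, \widehat{(|u|^{p-1}u)}(\xi,t)\, d\xi \right|.
\]
The goal is to bound the right-hand side by $C\,A(t)^{p/2}$ times a small factor, or better, to exploit a cancellation so that the estimate closes on a long time interval. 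The cleanest route is to avoid a genuine energy cancellation and instead simply estimate: by Lemma \ref{Lemma3} (the algebra property of $G^{\sigma,1}$, valid since $1 > 1/2$), $\|\,|u|^{p-1}u\,\|_{G^{\sigma,1}} \lesssim \|u\|_{G^{\sigma,1}}^p$, so Cauchy--Schwarz gives $\frac{d}{dt}A(t) \lesssim A(t)^{(p+1)/2}$, which by itself only yields local-in-time control. The extra ingredient that buys global existence is that, after integrating in time and using the local theory, one gains a factor that is a positive power of $\sigma$ from the high-frequency portion, because for frequencies $|\xi| \gtrsim 1/\sigma$ one can trade the Gevrey weight for decay.

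More precisely, the key step I would isolate is a \emph{gain estimate}: there is $\theta>0$ such that
\[
\left\| \,|u|^{p-1}u\, \right\|_{G^{\sigma,1}} \;\lesssim\; \|u\|_{G^{\sigma,1}}\,\|u\|_{H^{1}}^{p-1} \;+\; \sigma^{\theta}\,\|u\|_{G^{\sigma,1}}^{p},
\]
or a commutator-type bound of the same flavor, where the first term involves only the $L^2$-critical/subcritical Sobolev norm (which is controlled by mass conservation together with the one-dimensional $H^1$ theory on $[0,T]$ for the defocusing problem, or more simply by the a priori $L^2$ bound combined with a Gagliardo--Nirenberg interpolation), and the second term carries a genuine power of $\sigma$. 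Feeding this into the differential inequality for $A(t)$ and choosing $\sigma$ small, a standard bootstrap shows that if $A(t) \le 2A(0)$ fails at some first time $t_* \le T$, then the gain term is too small to have caused the doubling, a contradiction — provided $\sigma \le C\,T^{-1-\epsilon}$ for the $\epsilon$-loss coming from summing the dyadic pieces in the gain estimate. This is exactly the decay rate asserted in Theorem \ref{main}.

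I expect the main obstacle to be proving the gain estimate with an honest power $\theta>0$ of $\sigma$: the algebra property from Lemma \ref{Lemma3} alone is lossless in $\sigma$, so one must refine it. The natural fix is a Littlewood--Paley decomposition of the $p$-fold product, separating the case where the output frequency $\xi$ and all input frequencies are $\lesssim 1/\sigma$ (where $e^{\sigma|\xi|} \sim 1$ and the bound reduces to the ordinary $H^1$ algebra estimate, giving the first term) from the case where the largest input frequency is $\gtrsim 1/\sigma$ (where one extracts $e^{\sigma|\xi|} \le e^{\sigma(|\xi|- c/\sigma)} e^{c} \lesssim \ldots$, or more robustly uses $\langle\xi\rangle^{-\delta} \lesssim \sigma^{\delta}$ on that regime to produce $\sigma^{\delta}$ at the cost of $\delta$ derivatives, which are available since we work at regularity $s'=1 > 1/2$). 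Summing the resulting geometric-type series in the dyadic parameter is where the $\epsilon$ is lost, accounting for the $T^{-1-\epsilon}$ rather than $T^{-1}$ in the statement. Once the gain estimate is in hand, the bootstrap and the choice of $\sigma$ are routine, and Proposition \ref{propglobal} follows with the stated $\sigma$; combined with the embedding \eqref{embedding2} and the $L^2$ theory, this also completes Theorem \ref{main}.
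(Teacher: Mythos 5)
Your plan departs from the paper's argument in a way that creates a genuine gap at the closing step. You deliberately forgo the energy cancellation and estimate $\frac{d}{dt}A(t)$, $A(t)=\|u(t)\|_{G^{\sigma,1}}^2$, via the proposed gain estimate $\||u|^{p-1}u\|_{G^{\sigma,1}}\lesssim \|u\|_{G^{\sigma,1}}\|u\|_{H^1}^{p-1}+\sigma^{\theta}\|u\|_{G^{\sigma,1}}^{p}$. This yields $\frac{d}{dt}A\lesssim \|u\|_{H^1}^{p-1}A+\sigma^{\theta}A^{(p+1)/2}$, and the first term carries no power of $\sigma$ whatsoever. Consequently your bootstrap (``if $A\le 2A(0)$ fails at a first time $t_*\le T$, the gain term is too small to have caused the doubling'') does not close: the lossless term can perfectly well cause the doubling, no matter how small $\sigma$ is. Repairing this by Gr\"onwall gives at best $A(t)\le A(0)e^{CK^{p-1}t}$ with $K=\sup_{[0,T]}\|u\|_{H^1}$, and then absorbing the $\sigma^{\theta}A^{(p+1)/2}$ term forces $\sigma$ to be exponentially small in $T$, not $\sigma\sim T^{-1-\epsilon}$ as you assert; since Proposition \ref{propglobal} is exactly what produces the radius $\sigma<CT^{-1-\epsilon}$ in Theorem \ref{main}, this quantitative loss is not cosmetic. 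Two further problems: the global $H^1$ bound you invoke cannot be obtained ``from the a priori $L^2$ bound combined with a Gagliardo--Nirenberg interpolation'' (interpolation bounds intermediate norms, not $H^1$ by $L^2$); it requires conservation of mass and of the defocusing energy, i.e., precisely the structure you chose to discard. And the gain estimate itself, which you correctly identify as the main obstacle, is only sketched; a low/high splitting at $|\xi|\sim 1/\sigma$ plausibly gives it, but only with $\theta<1/2$ (the non-weighted factors must be measured in $G^{\sigma,s}$ with $s>1/2$), which again is weaker than what the stated rate needs.

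The paper closes the argument differently, and this is the idea your plan is missing: set $U=e^{\sigma|\nabla|}u$ and track $S(t)=\int_{\R}|U|^2+|\nabla U|^2+\tfrac{2}{p+1}|U|^{p+1}\,dx$, which is conserved when $\sigma=0$. Because the mass--energy cancellation is built in, every term in $dS/dt$ involves only the commutator $N(u)=|U|^{p-1}U-\Lambda(|u|^{p-1}u)$, and Lemma \ref{lemest}, which bounds $e^{\sigma\sum_j|\eta_j|}-e^{\sigma|\sum_j\eta_j|}$, extracts a factor $\sigma^{\theta}$ from every term, for any $\theta<1$ (the restriction $\theta<1$ comes from a Gagliardo--Nirenberg endpoint, which is the true source of the $\epsilon$ in $T^{-1-\epsilon}$, not dyadic summation). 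This gives \eqref{sestimate}, in which the entire right-hand increment is $O(\sigma^{\theta})$, so the $2S(0)$/$4S(0)$ bootstrap closes uniformly in $T$ with $\sigma\sim T^{-1/\theta}$. In short: without an energy-type functional (or some substitute cancellation), the lossless piece of the nonlinearity blocks both your bootstrap as stated and the polynomial decay of $\sigma$ in $T$; with it, no external $H^1$ theory is needed at all.
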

\noindent To prove this, we first state a preliminary lemma.
\begin{lem}\label{lemest}
Let $n \in \mathbb{N}$, $n \geq 2$, and let $\eta_{1}, \ldots, \eta_{n} \in \mathbb{R}$. Then
\[
e^{\sigma \sum_{j = 1}^{n}|\eta_{j}|} - e^{\sigma \left|\sum_{j = 1}^{n}\eta_{j}\right|} \leq \sum_{k = 1}^{n}\left(2 \sigma \min\left( \left| \sum_{j \neq k} \eta_{j} \right|, |\eta_{k}| \right)\right)^{\theta}e^{\sigma \sum_{j = 1}^{n}|\eta_{j}|}.
\]
for any $\theta \in [0,1]$.
\end{lem}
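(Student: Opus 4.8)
The plan is to reduce the $n$-term estimate to the two-term case and then prove that by an elementary inequality. For the two-term reduction, write $A = \sum_{j=1}^n |\eta_j|$ and $B = |\sum_{j=1}^n \eta_j|$, and telescope: pick any index $k$, set $a = |\eta_k|$ and $b = |\sum_{j\neq k}\eta_j|$, and note that $B \geq |\,|\sum_{j\neq k}\eta_j| - |\eta_k|\,|$ and $B \leq b + a$, while $A = a + (\sum_{j\neq k}|\eta_j|) \geq a + b$. The cleanest route is to observe that $e^{\sigma A} - e^{\sigma B} = e^{\sigma A}(1 - e^{-\sigma(A-B)})$ and $1 - e^{-x} \leq \min(1,x) \leq x^\theta$ for $x \geq 0$ and $\theta \in [0,1]$ (using that for $x \le 1$ we have $x \le x^\theta$, and for $x \ge 1$ we have $1 - e^{-x} \le 1 \le x^\theta$). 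So it suffices to bound $\sigma(A - B)$ from above by $\sum_{k=1}^n 2\sigma \min(|\sum_{j\neq k}\eta_j|, |\eta_k|)$, i.e.\ to show
\[
\sum_{j=1}^n |\eta_j| - \Big|\sum_{j=1}^n \eta_j\Big| \;\leq\; 2\sum_{k=1}^n \min\Big(\Big|\sum_{j\neq k}\eta_j\Big|, |\eta_k|\Big).
\]

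For this last inequality I would induct on $n$. The base case $n = 2$ reads $|\eta_1| + |\eta_2| - |\eta_1 + \eta_2| \leq 2\min(|\eta_1|,|\eta_2|) + 2\min(|\eta_2|,|\eta_1|) = 4\min(|\eta_1|,|\eta_2|)$, which follows since $|\eta_1| + |\eta_2| - |\eta_1 + \eta_2| \leq 2\min(|\eta_1|,|\eta_2|)$ (a standard fact: if, say, $|\eta_2| \le |\eta_1|$, then $|\eta_1 + \eta_2| \ge |\eta_1| - |\eta_2|$, so the left side is at most $2|\eta_2|$). For the inductive step, group $\eta_1, \ldots, \eta_{n-1}$ together, let $\zeta = \sum_{j=1}^{n-1}\eta_j$, apply the $n=2$ case to $\zeta$ and $\eta_n$, and then apply the inductive hypothesis to $\eta_1, \ldots, \eta_{n-1}$; one then checks that each $\min$ term produced is dominated by a corresponding term $\min(|\sum_{j\neq k}\eta_j|,|\eta_k|)$ in the target sum, using monotonicity of the minima under the obvious inequalities between the partial sums. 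An alternative to induction is to argue directly: the left-hand side $A - B$ equals $2\sum_{\eta_j < 0}|\eta_j|$ when $\sum \eta_j \ge 0$ (and symmetrically otherwise), and each negative $\eta_k$ contributes $2|\eta_k| = 2\min(|\eta_k|, \ldots)$ once one checks $|\eta_k| \le |\sum_{j\neq k}\eta_j|$ in that case, which holds because $\sum_{j \neq k}\eta_j = (\sum_j \eta_j) - \eta_k \ge -\eta_k = |\eta_k|$.

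The main obstacle is the bookkeeping in the reduction: making sure that after telescoping the $n$-fold sum, the minimum appearing at each stage is genuinely bounded by $\min(|\sum_{j\neq k}\eta_j|,|\eta_k|)$ rather than by a larger quantity, since the "complementary sum" $\sum_{j \neq k}\eta_j$ for the original problem differs from the complementary sums that arise in the pairwise splitting. The direct sign-based argument sidesteps most of this and is probably the shortest; the factor $2$ on the right (versus the naive $2$ from the two-term bound) gives enough slack that no sharpness is lost. Once the polynomial bound $\sigma(A-B) \le \sum_k 2\sigma\min(\cdots)$ is in hand, replacing it by the $\theta$-power via $1 - e^{-x} \le x^\theta$ and factoring out $e^{\sigma A} = e^{\sigma\sum|\eta_j|}$ finishes the proof; note the stated inequality is slightly weaker than what this gives, since $\big(\sum_k c_k\big)^\theta \le \sum_k c_k^\theta$ for $\theta \in [0,1]$, so it is enough to produce the bound with a single $\theta$-power of the whole sum and then distribute.
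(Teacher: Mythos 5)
Your proposal is correct, and it takes a genuinely different route from the paper. The paper proves the lemma by strong induction on $n$, peeling off $\eta_{m+1}$ via the splitting
\[
e^{\sigma\sum_{j=1}^{m+1}|\eta_j|}-e^{\sigma|\sum_{j=1}^{m+1}\eta_j|}
= e^{\sigma|\eta_{m+1}|}\Bigl[e^{\sigma\sum_{j=1}^{m}|\eta_j|}-e^{\sigma|\sum_{j=1}^{m}\eta_j|}\Bigr]
+ e^{\sigma|\eta_{m+1}|}e^{\sigma|\sum_{j=1}^{m}\eta_j|}-e^{\sigma|\sum_{j=1}^{m+1}\eta_j|},
\]
and invoking the two-variable case from an external reference at each stage; this keeps the $\min$ structure termwise but requires reconciling the complementary sums over the first $m$ variables with those over all $n$ variables. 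You instead factor $e^{\sigma A}-e^{\sigma B}=e^{\sigma A}\bigl(1-e^{-\sigma(A-B)}\bigr)$ with $A=\sum_j|\eta_j|$, $B=|\sum_j\eta_j|$, use the elementary bound $1-e^{-x}\le\min(1,x)\le x^{\theta}$, reduce everything to the purely scalar inequality $A-B\le 2\sum_k\min\bigl(|\sum_{j\neq k}\eta_j|,|\eta_k|\bigr)$, and prove that by the sign identity $A-B=2\sum_{\eta_j<0}|\eta_j|$ (when $\sum_j\eta_j\ge 0$, and symmetrically otherwise) together with the observation that $\sum_{j\neq k}\eta_j\ge-\eta_k=|\eta_k|$ for each negative $\eta_k$; the subadditivity $\bigl(\sum_k c_k\bigr)^{\theta}\le\sum_k c_k^{\theta}$ then distributes the $\theta$-power and yields exactly the stated bound. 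Your direct argument is self-contained (no appeal to the $n=2$ result of the cited reference), avoids induction entirely, and produces the $\min$ with the correct complementary sum over all $j\neq k$ at once, which is precisely the bookkeeping point the inductive route has to handle; your alternative inductive sketch for the scalar inequality is less carefully justified, but since you rely on the sign-based argument, the proof stands. What the paper's approach buys is mainly consistency with the two-variable estimate already established in the prior work it cites; your approach buys a shorter, more transparent, and fully elementary proof.
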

\begin{proof}
By strong induction on $n$.  The case $n = 2$ was shown in \cite{SD2016}.  Thus, we may assume the result holds for $n \leq m$.  Consider the case $n = m+1$.  We may write
\[\begin{aligned}
e^{\sigma \sum_{j = 1}^{m+1}|\eta_{j}|} - e^{\sigma \left|\sum_{j = 1}^{m+1}\eta_{j}\right|} & = e^{\sigma|\eta_{m+1}|}\left[ e^{\sigma\sum_{j = 1}^{m}|\eta_{j}|} - e^{\sigma|\sum_{j = 1}^{m}\eta_{j}|}\right] \\
& \quad + e^{\sigma|\eta_{m+1}|}e^{\sigma|\sum_{j = 1}^{m}\eta_{j}|} - e^{\sigma \left|\sum_{j = 1}^{m+1}\eta_{j}\right|}.
\end{aligned}\]
Applying the inductive hypothesis to the first line above, we have
\[
e^{\sigma\sum_{j = 1}^{m}|\eta_{j}|} - e^{\sigma|\sum_{j = 1}^{m}\eta_{j}|} \leq \sum_{k = 1}^{m}\left(2 \sigma \min\left( \left| \sum_{j \neq k} \eta_{j} \right|, |\eta_{k}| \right)\right)^{\theta}e^{\sigma\sum_{j = 1}^{m}|\eta_{j}|}.
\]
Applying the inductive hypothesis and the triangle inequality to the second line, we have
\[\begin{aligned}
& e^{\sigma|\eta_{m+1}|}e^{\sigma|\sum_{j = 1}^{m}\eta_{j}|} - e^{\sigma \left|\sum_{j = 1}^{m+1}\eta_{j}\right|} \\
& \qquad \leq \left( 2 \sigma \min \left( \left|\sum_{j = 1}^{m} \eta_{j}  \right|, |\eta_{m+1}|\right) \right)^{\theta}e^{\sigma\sum_{j = 1}^{m+1}|\eta_{j}|}.
\end{aligned}\]
The desired result follows.
\end{proof}

\begin{proof}[Proof of Proposition \ref{propglobal}]
Recall that 
\[
\| f \|_{H^{s}} \sim \| f \|_{H^{s-1}} + \| \nabla f \|_{H^{s-1}}
\]
for $f \in H^{s}$ (see \cite{T2006}, Appendix A).  By the commutativity of Fourier multipliers, this implies that
\[
\| u \|_{G^{\sigma,1}} \sim \| u \|_{G^{\sigma}} + \| \nabla u \|_{G^{\sigma}} \sim \left( \| u \|_{G^{\sigma}}^2 + \| \nabla u \|_{G^{\sigma}}^2 \right)^{1/2}
\]
for $u \in G^{\sigma,1}$.  Thus, it suffices to estimate the norms above to obtain the desired result.

Let $\Lambda$ be the pseudodifferential operator defined by the Fourier multiplier
\[
\widehat{\Lambda u} = e^{\sigma|\xi|} \hat{u}(\xi,t),
\]
and define
\[
U(x,t) = \Lambda u.
\]
We observe that
\[
\| u \|_{G^{\sigma}} = \| U \|_{L^{2}} \quad \textrm{and} \quad \| \nabla u \|_{G^{\sigma}} = \| \nabla U \|_{L^{2}}.
\]
Moreover, it is easy to see that $U$ and $\overline{U}$ satisfy the equations
\[
U_{t} = i \Delta U - i \Lambda\left(| u |^{p-1}u\right)
\]
and
\[
\overline{U}_{t} = -i \Delta \overline{U} + i \Lambda \left(| u |^{p-1}\overline{u}\right).
\]
Next, define a quantity $S(t)$ by
\[
S(t) = \int_{\R} | U(x,t) |^{2} + |\nabla U|^2 + \frac{2}{p+1}|U|^{p+1}\ dx.
\]
Observe that in the case $\sigma = 0$, this quantity would be conserved for equation \eqref{NLS}.  By the Fundamental Theorem of Calculus, we have that
\[
S(t) = S(0) + \int_{0}^{t} \frac{dS(\tau)}{dt} \ d\tau.
\]
A lengthy computation will show that
\[\begin{aligned}
\frac{dS}{dt} & = i\int_{\R} \overline{\nabla U} \cdot \nabla N(u) - \nabla N(\overline{u}) \cdot \nabla U\ dx \\
& \quad + i\int_{\R} \overline{U} N(u) - N(\overline{u}) U\ dx \\
& \quad + i\int_{\R} |U|^{p-1}\overline{U}N(u) - N(\overline{u})|U|^{p-1}U\ dx,
\end{aligned}\]
where
\[
N(u) = |U|^{p-1}U - \Lambda(|u|^{p-1}u).
\]
Applying H\"older's inequality, we see that
\begin{equation}\label{holder}\begin{aligned}
& S(t) \leq S(0) + 2\int_{0}^{t} \| \nabla U(\tau) \|_{L^{2}}\|\nabla N(u)(\tau) \|_{L^{2}}\ d\tau \\
& \qquad \qquad + 2\int_{0}^{t} \| U(\tau) \|_{L^{2}}\| N(u)(\tau) \|_{L^{2}}\ d\tau \\
& \qquad \qquad + 2 \int_{0}^{t} \left\| |U(\tau)|^{p-1}U(\tau)  \right\|_{L^{2}} \| N(u(\tau)) \|_{L^{2}}\ d\tau.
\end{aligned}\end{equation}
Note that
\[
\left\| |U(\tau)|^{p-1}U(\tau)  \right\|_{L^{2}} = \left\| U(\tau)  \right\|_{L^{2p}}^{p}.
\]
We estimate this using the Gagliardo-Nirenberg inequality, giving us
\[
\left\| U(\tau)  \right\|_{L^{2p}} \lesssim \| U(\tau) \|_{L^{2}}^{1-\alpha} \| \nabla U \|_{L^{2}}^{\alpha}
\]
with
\[
\alpha = \frac{1}{2}\left( 1 - \frac{1}{p} \right).
\]
Thus, equation \eqref{holder} becomes
\begin{equation}\begin{aligned}\label{holder2}
S(t) & \leq S(0) + 2\int_{0}^{t} \| U(\tau) \|_{L^{2}}\| N(u)(\tau) \|_{L^{2}}\ d\tau \\
& \quad + 2\int_{0}^{t} \| \nabla U(\tau) \|_{L^{2}}\|\nabla N(u)(\tau) \|_{L^{2}}\ d\tau \\
& \quad + 2C \int_{0}^{t} \left( \| U(\tau) \|_{L^{2}}^{1-\alpha} \| \nabla U \|_{L^{2}}^{\alpha} \right)^{p} \| N(u(\tau)) \|_{L^{2}}\ d\tau.
\end{aligned}\end{equation}

Next, we must estimate each of the terms involving the nonlinear operator $N(u)$.  By Plancherel's theorem, it suffices to consider
\[
\| \widehat{N(u)} \|_{L^{2}_{\xi}} \quad \textrm{and} \quad \| \widehat{\nabla N(u)} \|_{L^{2}_{\xi}}.
\]
For this, we first rewrite $N(u)$ as
\[
N(u) = |e^{\sigma|\nabla|}u|^{p-1}(e^{\sigma|\nabla|}u) - \Lambda\left( |u|^{p-1}u \right).
\]
We then take the spatial Fourier transform, which we write as the convolution integral
\[
\widehat{N(u)} = \int_{H}\left(e^{\sum_{j=1}^{2k+1} \sigma|\eta_j|} - e^{\sigma|\xi|} \right)\left[\prod_{j=1}^{k} \hat{\overline{u}}(\eta_{2j-1}) \hat{u}(\eta_{2j})\right] \hat{u}(\eta_{2k+1}) \ d\eta,
\]
where $H$ denotes the hyperplane $\xi = \eta_{1} + \cdots + \eta_{2k+1}$ and
\[
d\eta = d\eta_1 \cdots d \eta_{2k+1}.
\]
To estimate $|\widehat{N(u)}|$, we apply Lemma \ref{lemest} to obtain
\begin{equation}\label{nestimate}\begin{aligned}
|\widehat{N(u)}| & \leq C \sigma^{\theta} \sum_{m = 1}^{2k+1} \int_{H} \left( \min \left\{ \left| \sum_{m \neq j} \eta_{j} \right|, |\eta_{m}| \right\} \right)^{\theta} G(\eta)\ d\eta \\
& \leq C \sigma^{\theta} \sum_{m = 1}^{2k+1} \int_{H} |\eta_{m}|^{\theta} G(\eta)\ d\eta,
\end{aligned}\end{equation}
where $C > 0$ is a generic constant which may be different in each line, $\eta = (\eta_{1}, \ldots, \eta_{2k+1})$, and
\[\begin{aligned}
G(\eta) & = \left[\prod_{j=1}^{k} \left|e^{\sigma|\eta_{2j-1}|}\hat{\overline{u}}(\eta_{2j-1}) \right| \left| e^{\sigma|\eta_{2j}|} \hat{u}(\eta_{2j-1})\right|\right] \left|e^{\sigma|\eta_{2k+1}}\hat{u}(\eta_{2k+1})\right| \\
& = \left[\prod_{j=1}^{k} \left|\hat{\overline{U}}(\eta_{2j-1}) \right| \left| \hat{U}(\eta_{2j-1})\right|\right] \left|\hat{U}(\eta_{2k+1})\right|.
\end{aligned}\]
If we define $\hat{v}_{j} = |\hat{U}(\eta_{j})|$ or $|\hat{\overline{U}}(\eta_{j})|$, as appropriate, then we can rewrite each of the integrals in equation \eqref{nestimate} as a convolution of the form
\[
\hat{v}_{1} * \ldots * \hat{v}_{j-1} * \widehat{|\nabla|^{\theta}v_{j}} * \hat{v}_{j+1} * \cdots * \hat{v}_{2k+1}.
\]
Thus we have that
\[\begin{aligned}
\| \widehat{N(u)} \|_{L^{2}} & \lesssim \sigma^{\theta} \sum_{m = 1}^{2k+1} \| \hat{v}_{1} * \ldots * \hat{v}_{m-1} * \widehat{|\nabla|^{\theta}v_{m}} * \hat{v}_{m+1} * \cdots * \hat{v}_{2k+1} \|_{L^{2}} \\
& \lesssim \sigma^{\theta} \sum_{m = 1}^{2k+1} \| v_{1} \ldots v_{m-1} (|\nabla|^{\theta} v_{m}) v_{m+1} \cdots v_{2k+1} \|_{L^{2}} \\
& \lesssim \sigma^{\theta} \sum_{m = 1}^{2k+1} \left(\prod_{j \neq m} \| v_{j} \|_{L^{\infty}}\right) \| |\nabla|^{\theta} v_{m} \|_{L^{2}} \\
& \lesssim \sigma^{\theta} \sum_{m = 1}^{2k+1} \left(\prod_{j \neq m} \| v_{j} \|_{H^{1}}\right) \| v_{m} \|_{H^{1}} \\
& \lesssim \sigma^{\theta} \prod_{m = 1}^{2k+1} \| v_{j} \|_{H^{1}} \\
& \lesssim \sigma^{\theta} \left(\| U \|_{L^{2}}^{2} + \| \nabla U \|_{L^{2}}^{2} \right)^{p/2}.
\end{aligned}\]
It follows that
\begin{equation}\label{finalest1}
\| N(u) \|_{L^{2}} \lesssim \sigma^{\theta} \left(\| U \|_{L^{2}}^{2} + \| \nabla U \|_{L^{2}}^{2} \right)^{p/2}.
\end{equation}

For the $\nabla N(u)$ terms, we observe that $|\widehat{\nabla N(u)}| = |\xi| |\widehat{ N(u)}|$. Analogously, the problem of estimating the norm of $\widehat{\nabla N(u)}$ can be reduced by Lemma \ref{lemest} to estimating a sum of terms of the form
\[\begin{aligned}
& \sigma^{\theta} \int_{H} |\xi|\left( \min \left\{\left|\sum_{j \neq \ell}\eta_{j}\right|, |\eta_{\ell}| \right\} \right)^{\theta} \times \\
& \qquad \times \left[\prod_{j=1}^{k} |e^{\sigma|\eta_{2j}|} \hat{\overline{u}}(\eta_{2j}) | |e^{\sigma|\eta_{2j-1}|}\hat{u}(\eta_{2j-1})|\right] |e^{\sigma|\eta_{2k+1}|}\hat{u}(\eta_{2k+1})|\ d\eta.
\end{aligned}\]
To estimate these integrals, we recall that $\xi = \eta_{1} + \cdots + \eta_{2k+1}$, so that
\[
|\xi| \leq \left|\sum_{j \neq \ell} \eta_{j}\right| + |\eta_{\ell}|.
\]
In the case where
\[
\left|\sum_{j \neq \ell} \eta_{j}\right| \geq |\eta_{\ell}|,
\]
we obtain that
\[
|\xi| \leq 2\left|\sum_{j \neq \ell} \eta_{j}\right| \leq 2\sum_{j \neq \ell} \left|\eta_{j}\right|.
\]
It follows that $|\widehat{\nabla N(u)}|$ can be estimated by a sum of terms of the form
\[
\sigma^{\theta} \int_{H} |\eta_{\ell}|^{\theta} |\eta_{n}| \left[\prod_{j=1}^{k} | \hat{\overline{U}}(\eta_{2j}) | |\hat{U}(\eta_{2j-1})|\right] |\hat{U}(\eta_{2k+1})|\ d\eta,
\]
where $\ell \neq n$.  The case $\left|\sum_{j \neq \ell} \eta_{j}\right| \leq |\eta_{\ell}|$ is similar.  For both of these cases, we observe that these integrals can be written in convolution form as
\[
\hat{v}_{1} * \cdots * \hat{v}_{j-1} * \left( \widehat{|\nabla|^{\theta} v_{j}} \right) * \hat{v}_{j+1} * \cdots * v_{2k} * (\widehat{\nabla v_{2k+1}}).
\]
We estimate these terms by
\[\begin{aligned}
& \| v_{1} \cdots v_{j-1} \left(|\nabla|^{\theta} v_{j}\right) v_{j+1}  \cdots  v_{2k} \nabla v_{2k+1} \|_{L^{2}_{x}} \\
& \qquad \qquad \qquad \qquad \qquad \lesssim \| |\nabla| v_{2k+1}\|_{L^{2}} \| |\nabla|^{\theta} v_{j} \|_{L^{\infty}} \prod_{\ell \neq j, 2k+1} \| v_{\ell} \|_{L^{\infty}} \\
& \qquad \qquad \qquad \qquad \qquad \lesssim \prod_{j = 1}^{2k+1} \| v_{j} \|_{H^{1}} \\
& \qquad \qquad \qquad \qquad \qquad \lesssim \left( \| U \|_{L^{2}}^{2} + \| \nabla U \|_{L^{2}}^{2} \right)^{p/2}.
\end{aligned}\]
We remark that we have once again used the Gagliardo-Nirenberg inequality to estimate
\[
\| |\nabla|^{\theta} v_{j} \|_{L^{\infty}} \lesssim \| |\nabla|^{\theta} v_{j} \|_{L^{2}}^{\beta} \| |\nabla|^{\theta} v_{j} \|_{\dot{H}^{1-\theta}}^{1-\beta} \lesssim \| v_{j} \|_{H^{1}},
\]
which requires that 
\[
\frac{1}{2} = \beta(1-\theta).
\]
Thus, it is necessary that $0 \leq \theta < 1$.  We may now conclude that
\begin{equation}\label{finalest2}
\| \nabla N(u) \|_{L^{2}} \lesssim \sigma^{\theta} \left( \| U \|_{L^{2}}^{2} + \| \nabla U \|_{L^{2}}^{2} \right)^{p/2}
\end{equation}
If we now combine equations \eqref{holder2}, \eqref{finalest1}, and \eqref{finalest2}, we obtain that
\begin{equation}\label{sestimate}
S(t) \leq S(0) + C \sigma^{\theta}\int_{0}^{t} S^{\frac{p}{2}}(\tau)\left( 2S^{1/2}(\tau) + S^{p/2}(\tau) \right)\ d\tau.
\end{equation}

Our next step is to show that this quantity remains bounded for $t \in [0,T]$.  We apply a simple bootstrap argument.  Let H$(t)$ and C$(t)$ be the statements
\begin{itemize}
\item H$(t)$: $S(\tau) \leq 4 S(0)$ for $0 \leq \tau \leq t$.
\item C$(t)$: $S(\tau) \leq 2 S(0)$ for $0 \leq \tau \leq t$.
\end{itemize}
To close the bootstrap, we must prove the following four statements:
\begin{enumerate}
\item[(a)] H$(t) \Rightarrow$ C$(t)$;
\item[(b)] C$(t) \Rightarrow$ H$(t')$ for all $t'$ in a neighborhood of $t$;
\item[(c)] If $t_1, t_2, \ldots$ is a sequence in $[0,T]$ such that $t_n \rightarrow t \in [0,T]$, with C$(t_{n})$ true for all $t_n$, then C$(t)$ is also true;
\item[(d)] H$(t)$ is true for at least one $t \in [0,T]$.
\end{enumerate}

\begin{proof}[Proof of (a)]
Assuming the statement H$(t)$, equation \eqref{sestimate} gives us the estimate
\[
S(t) \leq S(0) + C\sigma^{\theta}(4S(0))^{\frac{p}{2}}\left( 2(4S(0))^{1/2} + (4S(0))^{p/2} \right)t.
\]
Taking the supremum, this gives us
\[
\sup_{t \in [0,T]} S(t) \leq S(0) + C\sigma^{\theta}(4S(0))^{\frac{p}{2}}\left( 2(4S(0))^{1/2} + (4S(0))^{p/2} \right)T.
\]
Choose $\sigma$ so that
\[
\sigma \leq \left[ C (4S(0))^{\frac{p}{2}} \left( 2(4S(0))^{1/2} + (4S(0))^{p/2} \right) T \right]^{-1/\theta},
\]
then the conclusion C$(t)$ follows.  Note that this requires that $\theta > 0$.
\end{proof}

\begin{proof}[Proof of (b)]
Assume $S(\tau) \leq 2 S(0)$ for all $\tau$ with $0 \leq \tau \leq t$.  We may then apply the local existence theory to construct solutions which exist on an interval $[t,t+\delta) \subset [0,T]$ for some small $\delta > 0$.  In particular, we can do this so that
\[
\sup_{\tau \in [t,t+\delta)} S(\tau) \leq 4 S(0).
\]
Since
\[
\sup_{\tau \in (t-\delta,t]} S(\tau) \leq 2 S(0)
\]
by assumption, the statement H$(t')$ holds for all $t' \in (t-\delta, t + \delta)$.
\end{proof}

\begin{proof}[Proof of (c)]
The statement in (c) follows immediately from the fact that our solutions are constructed so that the norm $\| u(t) \|_{G^{\sigma, 1}}$ defines a continuous function in time.
\end{proof}

\begin{proof}[Proof of (d)]
H$(0)$ holds by assumption.
\end{proof}

Based on the above, we may close the bootstrap, and it follows that C$(t)$ holds for all $t \in [0,T]$.  
\end{proof}
To conclude the proof of Theorem \ref{main}, we summarize what we have accomplished: our assumptions on $f$ imply that $f \in G^{\sigma_{0}}$.  By the Gevrey embedding \eqref{embedding1}, $f \in G^{\sigma,1}$ for $\sigma$ given by
\[
\sigma < \min \left\{ \sigma_{0}, \left[ C (4S(0))^{\frac{p}{2}} \left( 2(4S(0))^{1/2} + (4S(0))^{p/2} \right) T \right]^{-1-\epsilon}\right\}.
\]
Using the local theory of section \ref{LWP}, we can construct a solution $u$ up to some small time $\delta > 0$.  By the global theory of section \ref{GWP}, the norm of $u$ remains bounded, so we may continue our solution past time $\delta$ to the desired time $T$.
\section{Acknowledgments}

The authors would like to thank Alejandro J. Castro, whose comments provided the inspiration for this work.

\bibliographystyle{amsplain}
\bibliography{database}
\end{document}